\DeclareMathOperator{\Tr}{Tr}
\DeclareMathOperator{\Alt}{Alt}
\DeclareMathOperator{\M}{\mathcal{M}}
\DeclareMathOperator{\N}{\mathcal{N}}
\DeclareMathOperator{\K}{\mathcal{K}}
\DeclareMathOperator{\rad}{rad}
\DeclareMathOperator{\Sk}{\mathcal{S}}
\DeclareMathOperator{\A}{\mathcal{A}}
\DeclareMathOperator{\Symm}{Symm}
\theoremstyle{plain}
\newtheorem{theorem}{Theorem}
\newtheorem{corollary}{Corollary}
\newtheorem{lemma}{Lemma}
\theoremstyle{definition}
\newtheorem{definition}{Definition}
\begin{document}
\title[Rank properties of subspaces]
{Rank-related dimension bounds for    \\ 
subspaces of symmetric bilinear forms}

\author[R. Gow]{Rod Gow}
\address{School of Mathematics and Statistics\\
University College Dublin\\
 Ireland}
\email{rod.gow@ucd.ie}

\keywords{matrix, rank, constant rank subspace, symmetric matrix, spread.}
\subjclass{15A03, 15A33}

\begin{abstract} 

Let $V$ be a vector space of dimension $n$ over a field $K$ and 
let $\Symm(V)$ denote the space of symmetric bilinear forms defined on $V\times V$. Let $\M$ be a subspace of $\Symm(V)$.
We investigate a variety
of hypotheses concerning the rank of elements in $\M$ that lead to reasonable bounds for $\dim \M$. For example, if every non-zero element of 
$\M$ has odd rank, and $r$ is the maximum rank of the elements of $\M$, then $\dim \M\leq r(r+1)/2$ (thus $\dim \M$ is bounded independently of $n$).
This should be contrasted with the simple observation that $\Symm(V)$ contains a subspace of dimension $n-1$ in which each non-zero element
has rank $2$. 

The bound $r(r+1)/2$ is almost certainly too large, and a bound $r$ seems plausible, this being true when $K$ is finite.
We also show that the bound $\dim \M\leq r$ holds when $K$ is any field of characteristic 2. Indeed, we are able to show that a number
of results obtained for finite fields in the paper \cite{Gow2} also hold for any field of characteristic 2. 

Finally, suppose that $n=2r$, where $r$ is an odd integer, and the rank of each non-zero
element of $\M$ is either $r$ or $n$. We show that if $K$ has characteristic 2, then $\dim \M\leq 3r$. Furthermore, if $\dim \M=3r$, we obtain
interesting subspace decompositions of $\M$ and $V$ related to spreads, pseudo-arcs and pseudo-ovals.
Examples of such subspaces $\M$ exist if $K$ has an extension field
of degree $r$. We remark here that we have obtained similar theorems when $K$ is a finite field of arbitrary characteristic.

\end{abstract}
\maketitle

\section{Symmetric bilinear forms over an arbitrary field}

\noindent Let $K$ be an arbitrary field and let $V$ be a vector space of finite dimension $n$ over $K$. 
Let $\M$ be a subspace of $\Symm(V)$, where
$\Symm(V)$ 	
denotes the 
vector space of symmetric bilinear forms defined on $V\times V$.
The 
intention of this paper is to investigate if, given some hypothesis about the rank of each non-zero element of $\M$, we may
deduce an upper bound for $\dim \M$. The main hypothesis we have in mind is that the rank of each non-zero element of
$\M$ is odd. We have studied this type of situation already when $K$ is finite, but as we used counting
or divisibility arguments that depended on the finiteness of $|K|$, these methods are ineffective when $K$ is infinite. Nonetheless, we feel
that dimension bounds that apply in the finite field case probably also apply in the infinite case, and we attempt in this paper 
to extend our earlier methods so that we may consider infinite fields. Our findings are not as sharp as
those we obtained in the finite field case, but we hope to make improvements eventually.

While we are using the language and techniques of the theory of symmetric bilinear forms, it will be more convenient
briefly to employ ideas more easily expressed in terms of symmetric matrices to establish some of our foundational material. We begin
by referring to an earlier result of the author.

\begin{theorem} \label{common_zeros}
Let $\N$ be a subspace of $n\times n$ symmetric matrices over the field $K$.
Suppose that for some integer $r$ satisfying $1\leq r<n$, each non-zero element of $\N$ has rank at most $r$.
Then provided $|K|\geq r+1$, there is an invertible $n\times n$ matrix $X$, say, such that for each element $S$ of $\N$, we
have
\[
 XSX^T=\left(
\begin{array}
{cc}
        0_{n-r}&A_2\\
        A_2^T&A_1  
\end{array}
\right),
\] 
where $0_{n-r}$ is the $(n-r) \times (n-r)$ zero matrix, $A_1$ is an $r\times r$ symmetric matrix, 
and $A_2$ is an $(n-r)\times r$ matrix.

\end{theorem}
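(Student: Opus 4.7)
The conclusion is equivalent to finding an $(n-r)$-dimensional subspace $W \subseteq V$ on which every form in $\N$ vanishes identically: given such a $W$, picking a basis of $W$, extending it to a basis of $V$, and letting $X$ be the inverse of the resulting change-of-basis matrix arranges the top-left $(n-r) \times (n-r)$ block of $XSX^T$ to be zero for every $S \in \N$. So the real task is to produce such a $W$.

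My plan would be to take an element $B_0 \in \N$ of maximum rank $r_0$. Since $r_0 \leq r$, I may replace $r$ with $r_0$ and assume $B_0$ itself has rank exactly $r$, since a common isotropic subspace of dimension $n - r_0$ contains $(n-r)$-dimensional isotropic subspaces. Setting $W_0 = \rad(B_0)$ produces the candidate of dimension $n - r$: picking a complement $U$ with $V = W_0 \oplus U$, the matrix of $B_0$ in an adapted basis has zero upper-left $(n-r) \times (n-r)$ block and an invertible symmetric $r \times r$ matrix $C$ in the lower-right. Any $B \in \N$ has matching block form with blocks $P$, $Q$, $Q^T$, $R$, and the task becomes to show $P = 0$ for every such $B$.

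The main tool I would use is the pencil $B_0 + tB$, each of whose members has rank at most $r$ by hypothesis. Since $\det(C + tR) \in K[t]$ is a nonzero polynomial of degree at most $r$, the hypothesis $|K| \geq r + 1$ produces a value $t_0 \in K$ with $\det(C + t_0 R) \neq 0$; on this ``good'' set one performs the Schur-complement reduction with respect to the invertible bottom-right block to obtain
\[
\rank(B_0 + t_0 B) = r + \rank\bigl(t_0 P - t_0^2 Q(C + t_0 R)^{-1} Q^T\bigr).
\]
Combined with $\rank(B_0 + t_0B) \leq r$, this forces $t_0 P = t_0^2 Q(C + t_0 R)^{-1} Q^T$, which after clearing denominators becomes a matrix-valued polynomial identity $P \det(C + tR) = t Q\,\mathrm{adj}(C + tR) Q^T$ in $K[t]$; specializing at $t = 0$ then yields $P \det(C) = 0$ and hence $P = 0$ as desired.

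The hard part will be the transition from ``holds for sufficiently many $t_0 \in K$'' to a genuine polynomial identity in $K[t]$: a priori the identity holds only where $\det(C + tR)$ is nonzero, and one has to leverage the hypothesis $|K| \geq r + 1$ together with the degrees of the polynomials involved to upgrade the pointwise statement to a global one, so that the specialization at $t = 0$ (where the conclusion $P = 0$ is actually extracted) is legitimate. A secondary point is the initial reduction from $r_0 < r$ to $r_0 = r$, which is routine since isotropy passes to subspaces.
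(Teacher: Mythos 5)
Your reduction of the theorem to producing a common totally isotropic subspace of dimension $n-r$, and your choice of that subspace as the radical of an element $B_0$ of maximal rank, are both correct and are in the spirit of the argument the paper relies on (the paper itself gives no proof here, deferring to the proof of Theorem 1 of \cite{Gow2}). The gap is in the pencil step, and it is not the routine bookkeeping you hope for: it is calibrated to fail at exactly the stated field size. Your admissible set is $\{t_0\in K: t_0\neq 0 \mbox{ and } \det(C+t_0R)\neq 0\}$. Since $\det(C+tR)$ is a nonzero polynomial of degree at most $r$ that does not vanish at $t=0$, the excluded set can have as many as $r+1$ elements, so when $|K|=r+1$ there may be no admissible $t_0$ whatsoever (this really happens, e.g.\ when $\det(C+tR)$ is a nonzero scalar times $\prod_{\alpha\in K\setminus\{0\}}(t-\alpha)$), and the argument produces nothing. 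Even when admissible values exist, a single one only yields $P=t_0Q(C+t_0R)^{-1}Q^T$, which does not force $P=0$; to promote this to the identity $P\det(C+tR)=tQ\,\mathrm{adj}(C+tR)Q^T$ in $K[t]$ you need two matrices of polynomials of degree at most $r$ to agree at $r+1$ admissible points, which requires $|K|\geq 2r+2$. So as written your proof establishes the theorem only under a hypothesis roughly twice as strong as the one claimed.

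The repair is not to upgrade the pointwise identity but to abandon the Schur complement in favour of a single bordered minor, and to attach the pencil parameter to the maximal-rank form rather than to the other one. Suppose $P\neq 0$; then $B(u,v)\neq 0$ for some $u,v\in\rad(B_0)$ (take $u=v$ if $B$ is not identically zero on the diagonal of $\rad(B_0)$). Restrict $B+tB_0$ to $\langle u\rangle\oplus U$, respectively $\langle u,v\rangle\oplus U$, where $U$ is your complement on which $B_0$ has invertible matrix $C$. Since $B_0$ vanishes on $\rad(B_0)\times V$, the parameter $t$ enters only the lower-right $r\times r$ block, so the determinant of this $(r+1)\times(r+1)$, resp.\ $(r+2)\times(r+2)$, matrix is a polynomial in $t$ of degree exactly $r$, with leading coefficient $B(u,u)\det C$, resp.\ $-B(u,v)^2\det C$, hence nonzero. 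A polynomial of degree $r$ has a nonroot in any field with at least $r+1$ elements, and any such $t_0$ makes $B+t_0B_0$ a nonzero element of $\N$ of rank at least $r+1$, the desired contradiction. This is why the threshold is $|K|\geq r+1$ and not $2r+2$.
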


\begin{proof}
 
This follows from the proof of Theorem 1 of \cite{Gow2}. 

%Note that in the quoted proof, 
%the matrix $A_2$ is $(n-r)\times r$, not
%$r\times (n-r)$ as stated. 

\end{proof}

We next note that a matrix related to $XSX^T$ above has even rank.

\begin{lemma} \label{even_rank}
 Let $r$ be an integer satisfying $1\leq r<n$. Let $A_2$ be an arbitrary $(n-r)\times r$ matrix with entries in $K$.
 Then the $n\times n$ symmetric matrix 
 \[
 \left(
\begin{array}
{cc}
        0_{n-r}&A_2\\
        A_2^T&0_r  
\end{array}
\right)
\]
 has even rank.
\end{lemma}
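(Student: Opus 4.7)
The plan is to show directly that the rank of the displayed matrix equals $2\,\rank(A_2)$, which is manifestly even.

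First I would observe that the row space of
\[
M = \left(\begin{array}{cc} 0_{n-r} & A_2 \\ A_2^T & 0_r \end{array}\right)
\]
decomposes as a direct sum. The first $n-r$ rows of $M$ all have the form $(0,\ldots,0\mid v)$ with $v$ a row of $A_2$, while the last $r$ rows all have the form $(w\mid 0,\ldots,0)$ with $w$ a row of $A_2^T$ (equivalently, a column of $A_2$). Since any vector of the first shape whose last $r$ coordinates are zero is the zero vector, and similarly for the second shape, the span $U_1$ of the first block of rows and the span $U_2$ of the second block of rows satisfy $U_1\cap U_2=0$.

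Next I would compute the two dimensions separately. The dimension of $U_1$ equals the row rank of $A_2$, and the dimension of $U_2$ equals the row rank of $A_2^T$; both equal $\rank(A_2)$. Hence
\[
\rank(M) \;=\; \dim(U_1\oplus U_2) \;=\; \dim U_1+\dim U_2 \;=\; 2\,\rank(A_2),
\]
which is even. No real obstacle arises; the only thing to be careful about is recording explicitly that the block of zeros on the diagonal is what forces the two row-spaces to meet trivially, so that the dimensions add rather than merely satisfying an inequality.
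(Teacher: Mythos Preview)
Your proof is correct and is an explicit unpacking of the paper's first sentence, which simply says the result ``follows from the fact that a matrix and its transpose have the same rank''; your row-space decomposition makes precise why the block-zero diagonal forces $\rank M=\rank A_2+\rank A_2^T=2\rank A_2$.

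For comparison, the paper also records a second argument that is genuinely different in flavour: one multiplies $M$ on the right by the invertible matrix $\mathrm{diag}(-I_{n-r},I_r)$ to obtain
\[
\left(\begin{array}{cc} 0_{n-r} & A_2 \\ -A_2^T & 0_r \end{array}\right),
\]
which is skew-symmetric and hence of even rank; since multiplication by an invertible matrix preserves rank, $M$ itself has even rank. Your direct computation gives the stronger information $\rank M=2\rank A_2$, while the skew-symmetric trick avoids any explicit row-space analysis and plugs the result into a standard fact about alternating forms.
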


\begin{proof}
 This follows from the fact that a matrix and its transpose have the same rank. Alternatively, note that the product
 \[
  \left(
\begin{array}
{cc}
        0_{n-r}&A_2\\
        A_2^T&0_r  
\end{array}
\right)\left(
\begin{array}
{cc}
        -I_{n-r}&0\\
        0&I_r  
\end{array}
\right)=\left(
\begin{array}
{cc}
        0_{n-r}&A_2\\
        -A_2^T&0_r  
\end{array}
\right)
 \]
 is skew-symmetric, and hence has even rank. Since an $n\times n$ matrix has the same rank as its product with an invertible
 $n\times n$ matrix, the given symmetric matrix has even rank.

\end{proof}

We now use the results proved so far to obtain a dimension bound for an odd rank subspace of symmetric matrices.

\begin{theorem} \label{odd_dimension_bound}

Let $\N$ be a non-zero subspace of $n\times n$ symmetric matrices over $K$. Suppose that the rank of each non-zero
element of $\N$ is odd, and that $r$ is the maximum rank of the elements of $\N$. Then, provided $|K|\geq r+1$, we have
\[
 \dim \N\leq \frac{r(r+1)}{2}.
\]

\end{theorem}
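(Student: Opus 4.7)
The plan is to reduce to the block form supplied by Theorem~\ref{common_zeros} and then observe that the lower-right symmetric block already determines the whole matrix, thanks to Lemma~\ref{even_rank}.

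First I would dispose of the trivial case $r=n$: since $\Symm(V)$ itself has dimension $n(n+1)/2 = r(r+1)/2$, the bound holds automatically. So assume $r < n$, which is exactly the hypothesis needed to invoke Theorem~\ref{common_zeros}. Using $|K|\geq r+1$, I may replace $\N$ by its image under $S\mapsto XSX^T$ and assume every $S\in\N$ has the block form
\[
S=\begin{pmatrix} 0_{n-r} & A_2(S)\\ A_2(S)^T & A_1(S)\end{pmatrix}
\]
with $A_1(S)$ an $r\times r$ symmetric matrix and $A_2(S)$ an $(n-r)\times r$ matrix.

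Next I would consider the linear map
\[
\pi_1\colon \N\longrightarrow \Symm_r(K),\qquad S\longmapsto A_1(S),
\]
where $\Symm_r(K)$ denotes the space of $r\times r$ symmetric matrices over $K$. The key claim is that $\pi_1$ is injective. Indeed, if $A_1(S)=0$ then
\[
S=\begin{pmatrix} 0_{n-r} & A_2(S)\\ A_2(S)^T & 0_r\end{pmatrix},
\]
which by Lemma~\ref{even_rank} has even rank. Since every non-zero element of $\N$ has odd rank by hypothesis, this forces $S=0$.

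The conclusion is then immediate: $\dim \N \leq \dim \Symm_r(K) = r(r+1)/2$. The only genuinely substantive ingredient is Theorem~\ref{common_zeros}, which requires the mild size condition $|K|\geq r+1$; once that is in hand, the parity combination of Lemma~\ref{even_rank} with the odd-rank hypothesis does all the work, so there is no serious obstacle beyond setting up the block form correctly.
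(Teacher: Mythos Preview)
Your proof is correct and follows essentially the same approach as the paper: both reduce via Theorem~\ref{common_zeros} to the block form, and your injectivity of $\pi_1$ is exactly the paper's observation that $\N\cap\A=0$ where $\A$ is the codimension-$r(r+1)/2$ subspace of matrices with $A_1=0$. The two arguments are the same, just phrased in the language of an injective projection versus a trivial intersection.
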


\begin{proof}
 
 If $r=n$, our theorem is trivial, and so we may as well assume that $1\leq r<n$. As far as properties of rank and dimension
 are concerned, we may also assume by Theorem \ref{common_zeros} that $\N$ is contained in the subspace $\Sk$ consisting
 of all symmetric matrices of the form
 \[
 \left(
\begin{array}
{cc}
        0_{n-r}&A_2\\
        A_2^T&A_1  
\end{array}
\right).
\] 
Now $\Sk$ contains a subspace $\A$, say, consisting of all matrices of the form
\[
 \left(
\begin{array}
{cc}
        0_{n-r}&A_2\\
        A_2^T&0_r  
\end{array}
\right).
\] 
We know from Lemma \ref{even_rank} that all elements of $\A$ have even rank, and thus since all non-zero elements
of $\N$ have odd rank, by hypothesis, it follows that $\N\cap \A=0$. Finally, as $\A$ has codimension $r(r+1)/2$
in $\Sk$, we deduce that
\[
 \dim \N\leq \frac{r(r+1)}{2},
\]
as required.

\end{proof}

We next restate Theorem \ref{odd_dimension_bound} in terms of symmetric bilinear forms, and include all finite fields
in our result.

\begin{corollary} \label{odd_rank_symmetric_forms}
 Let $K$ be an arbitrary field and let $\M$ be a  non-zero subspace of $\Symm(V)$. 
 Suppose that the rank of each non-zero
element of $\M$ is odd, and that $r$ is the maximum rank of the elements of $\M$. Then we have
\[
 \dim \M\leq \frac{r(r+1)}{2}.
\]
 
\end{corollary}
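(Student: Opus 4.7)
The plan is to reduce to Theorem~\ref{odd_dimension_bound} by translating symmetric bilinear forms into symmetric matrices, and then to dispatch the remaining small-finite-field case by appealing to the finite-field results of \cite{Gow2} that are recalled in the introduction.

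First, fix an ordered basis of $V$ and use it to identify $\Symm(V)$ with the space of $n\times n$ symmetric matrices over $K$; write $\N$ for the image of $\M$ under this identification. Because the identification is a $K$-linear isomorphism that sends each symmetric bilinear form to a matrix of the same rank, $\N$ has the same $K$-dimension as $\M$, every non-zero element of $\N$ has odd rank, and the maximum rank attained on $\N$ is exactly $r$.

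If $|K|\geq r+1$, Theorem~\ref{odd_dimension_bound} applies directly to $\N$ and yields $\dim\M=\dim\N\leq r(r+1)/2$. This case already covers every infinite field and every finite field of cardinality at least $r+1$, so the only gap remaining is when $K$ is a finite field with $|K|\leq r$.

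For this residual case, the argument in Theorem~\ref{odd_dimension_bound} breaks down because its appeal to Theorem~\ref{common_zeros} requires $|K|\geq r+1$. A naive extension-of-scalars trick (replace $K$ by a larger field $K'$ so as to apply Theorem~\ref{common_zeros} over $K'$) is not available either, since a generic $K'$-combination of elements of $\N$ may well have rank exceeding $r$ or rank of the wrong parity, so the odd-rank hypothesis is not preserved under base change. Instead, I would invoke the stronger bound $\dim\M\leq r$ that \cite{Gow2} establishes for odd-rank subspaces of symmetric matrices over every finite field; since $r\leq r(r+1)/2$ for all $r\geq 1$, this immediately supplies the required inequality. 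The main obstacle is thus this small-finite-field case, which in the present proof is simply imported from \cite{Gow2} rather than reproved; everything else is bookkeeping to convert between bilinear forms and matrices.
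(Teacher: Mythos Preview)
Your argument is correct and matches the paper's own proof: split into the case where Theorem~\ref{odd_dimension_bound} applies directly and the residual finite-field case, which is handled by the known bound $\dim\M\leq r$ over finite fields. One small bibliographic point: the paper attributes the finite-field bound $\dim\M\leq r$ to Corollary~3 of \cite{Du} rather than to \cite{Gow2}.
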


\begin{proof}
 Suppose first that $K$ is finite. Then we have $\dim \M\leq r$, by Corollary 3 of \cite{Du}. Since $r\leq r(r+1)/2$,
 the desired result holds in this case. On the other hand, if $K$ is infinite, the bound follows from Theorem 
 \ref{odd_dimension_bound} proved above.
\end{proof}

We believe that the uniform dimension bound $\dim \M\leq r$ holds for all fields, not just finite fields, and we will
prove this bound for fields of characteristic 2 in the next section.

We remark here that the hypothesis concerning the oddness of the rank in Corollary \ref{odd_rank_symmetric_forms} is of course critical.
It is easy to show by construction, for example, that $\Symm(V)$ contains a subspace of dimension $n-2r+1$ in which every non-zero element has 
even rank $2r$,
whenever $2\leq 2r\leq n$.

\section{Symmetric bilinear forms in characteristic 2}

\noindent Let $K$ be a field of characteristic 2 and let $V$ be a vector space of finite dimension $n$ over $K$. 
As before, let $\Symm(V)$ denote the 
vector space of symmetric bilinear forms defined on $V\times V$, and let $\Alt(V)$ denote the subspace of alternating bilinear forms
defined on $V\times V$. We have
\[
 \dim \Symm(V)=\frac{n(n+1)}{2},\quad \dim \Alt(V)=\frac{n(n-1)}{2}
\]
and thus $\Alt(V)$ has codimension $n$ in $\Symm(V)$. It follows that a subspace of $\Symm(V)$ which intersects $\Alt(V)$ trivially has
dimension at most $n$. 

It is well known that the rank of an element of $\Alt(V)$ is even. We deduce from the remark above that if $\M$ is a subspace of $\Symm(V)$
with the property that each of its non-zero elements has odd rank, then $\dim \M\leq n$. Part of the purpose of this section of the paper is to make this
dimension bound for odd-rank subspaces of $\Symm(V)$ more precise. 

We will make good use of the following property of fields of characteristic 2.
Let $f$ be an element of $\Symm(V)$ and let $u$ and $v$ be elements of $V$. Then we have
\begin{eqnarray*}
 f(u+v,u+v)&=&f(u,u)+f(u,v)+f(v,u)+f(v,v)\\
&=&f(u,u)+2f(u,v)+f(v,v)\\
&=&f(u,u)+f(v,v).
\end{eqnarray*}
It follows that the subset of vectors $u$ in $V$ that satisfy
\[
 f(u,u)=0
\]
is a subspace of $V$. Of course, $f$ is alternating precisely when this
subspace is $V$. 

The fact that the subset of points $u$ with $f(u,u)=0$ is a subspace enables us to extend some theorems relating rank and dimension in subspaces
of $\Symm(V)$ from finite fields to arbitrary fields of characteristic 2. 

\begin{definition}
 
Let $K$ be a field of characteristic $2$ and let $\M$ be a subspace of $\Symm(V)$. We set
\[
 V(\M)=\{ v\in V: f(v,v)=0 \mbox{ for all } f\in \M\}.
\]
We also set $\M_{\Alt}=\M\cap \Alt(V)$.
\end{definition}

 It is clear from our discussion above that $V(\M)$ is a subspace of $V$. Our first useful result is a bound for $\dim V(\M)$ when $\M_{\Alt}=0$.

\begin{theorem} \label{dimension_bound_for_common_isotropic_points}
Let $\M$ be a subspace of $\Symm(V)$ with $\M_{\Alt}=0$. Then we have
\[
 \dim V(\M)\leq \dim V-\dim \M.
\]

\end{theorem}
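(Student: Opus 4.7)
The plan is to prove the bound by induction on $\dim \M$. The crucial linearity fact I will use is that for any fixed $v_0 \in V$, the map $g \mapsto g(v_0, v_0)$ is a $K$-linear functional on $\M$; its kernel is therefore either all of $\M$ or a hyperplane. The base case $\dim \M = 0$ is trivial since then $V(\M) = V$ and the inequality reads $\dim V \leq \dim V$.

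For the inductive step, assume the result holds for all subspaces of dimension less than $k$, and suppose $\dim \M = k \geq 1$. Pick any nonzero $f \in \M$; since $\M_{\Alt} = 0$, the form $f$ is not alternating, so there exists $v_0 \in V$ with $f(v_0, v_0) \neq 0$. I set
\[
\M' = \{g \in \M : g(v_0, v_0) = 0\},
\]
which is the kernel of the linear functional $g \mapsto g(v_0, v_0)$ on $\M$. This functional is nonzero (it does not vanish on $f$), so $\M'$ is a hyperplane with $\dim \M' = k - 1$. Moreover $\M'_{\Alt} \subseteq \M_{\Alt} = 0$, so the inductive hypothesis applies and yields $\dim V(\M') \leq \dim V - (k - 1)$. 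Now observe that $v_0 \in V(\M')$ by the very definition of $\M'$, while $v_0 \notin V(\M)$ because $f \in \M$ has $f(v_0, v_0) \neq 0$. Hence $V(\M) \subsetneq V(\M')$, so $\dim V(\M) \leq \dim V(\M') - 1 \leq \dim V - k$, as required.

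I do not anticipate a substantive obstacle. The characteristic 2 hypothesis is used only implicitly, through the prior observation that $V(\M)$ is genuinely a $K$-subspace of $V$ (guaranteed by $f(u+v,u+v) = f(u,u)+f(v,v)$); the inductive step itself depends only on the $K$-linearity of $g \mapsto g(v_0,v_0)$ in the form-variable $g$, which is valid over any field. The one conceptual point worth flagging is that although the \emph{quadratic} map $v \mapsto f(v,v)$ is not $K$-linear in $v$ (and in non-perfect characteristic 2 the zero set $V(f)$ can even have $K$-codimension greater than one), it is the orthogonal \emph{dual} linearity — linearity of $f(v_0,v_0)$ in $f$ — that the induction exploits, together with the elementary dimension inequality $\dim V(\M) \leq \dim V(\M') - 1$ arising from strict containment.
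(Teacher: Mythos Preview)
Your proof is correct and takes a genuinely different route from the paper's. The paper argues by contradiction: assuming $\dim V(\M) > n - d$, it chooses a complement $U$ of $V(\M)$ in $V$ with $\dim U \leq d-1$, shows that the restriction map $\M \to \Symm(U)$ is injective (a form vanishing on the diagonal of $U$ must be alternating on all of $V$, by the characteristic~$2$ identity), and then obtains a contradiction because a $d$-dimensional subspace of $\Symm(U)$ must meet $\Alt(U)$ nontrivially when $\dim U < d$. Your argument is more direct and elementary: it uses only the $K$-linearity of $g \mapsto g(v_0,v_0)$ in the form variable to peel off one dimension at a time by induction. In spirit your proof is the one-sided, inequality version of the paper's subsequent remark that for perfect $K$ the pairing $(f,v)\mapsto f(v,v)^{1/2}$ is $K$-bilinear with trivial left kernel; by exploiting only the linearity in $f$, you sidestep the need for square roots and hence for perfectness. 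The paper's approach, on the other hand, makes explicit the role of the codimension-$\dim U$ inclusion $\Alt(U)\subset\Symm(U)$, which is the recurring structural theme of the section.
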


\begin{proof}
We set $d=\dim \M$. Suppose if possible that $\dim V(\M)>n-d$. Let $U$ be a subspace of $V$ that satisfies
\[
 V=U\oplus V(\M).
\]
Our supposition on $\dim V(\M)$ implies that $\dim U\leq d-1$. 

Let $\M_U$ denote the subspace of $\Symm(U)$ obtained by restricting the elements of $\M$ to $U\times U$. We have an obvious $K$-linear
transformation from $\M$ onto $\M_U$. 

Suppose that the kernel of this linear transformation is non-zero, and let $f$ be a non-zero element of 
$\M$ whose restriction to $U\times U$ is zero. Hence, we certainly have
\[
 f(u,u)=0
\]
for all $u\in U$. We also have by definition 
\[
 f(w,w)=0
\]
for all $w\in V(\M)$. We deduce that
\[
 f(u+w,u+w)=0
\]
for all $u\in U$ and all $w\in W$. Since $U\oplus V(\M)=V$, we see that $f$ is alternating, and this contradicts our basic assumption that
$\M_{\Alt}=0$.

We have hence obtained that
\[
 \dim \M_U=\dim \M=d.
\]
Now $\M_U$ is a subspace of $\Symm(U)$ of dimension $d$ and since $\dim U\leq d-1$, it follows that
\[
 \M_U\cap \Alt(U)\neq 0.
\]
Thus there is a non-zero element $g$, say, of $\M$ whose image in $\M_U$ is alternating. We have then
\[
 g(u,u)=0
\]
for all $u\in U$ and we deduce by the previous argument that $g$ is alternating on $V\times V$. This contradiction implies that
$\dim V(\M)\leq \dim V-\dim \M$.
\end{proof}

\noindent\textbf {Remark.} Suppose that $K$ is a perfect field of characteristic 2. Then the inequality above can be replaced by the equality
\[
 \dim V(\M)=\dim V-\dim \M.
\]
We can prove this in the following way. Under the assumption that $K$ is perfect, each element $x$ of $K$ has a unique square root,
$x^{1/2}$, say. 

Consider the pairing $\M\times V\to K$ which associates to a pair $(f,v)$ the element $f(v,v)^{1/2}$. This is easily seen to be $K$-bilinear.
The left kernel is $0$, since we are assuming that $\M_{\Alt}=0$. The right kernel is $V(\M)$. Thus, by a basic result in the theory of
bilinear pairings,
\[
 \dim \M=\dim V-\dim V(\M),
\]
which is the desired equality.

We apply Theorem \ref{dimension_bound_for_common_isotropic_points} to obtain a dimension bound, in terms of rank, 
for subspaces of $\Symm(V)$ that intersect $\Alt(V)$ trivially.

\begin{theorem} \label{rank_related_dimension_bound}
Let $\M$ be a subspace of $\Symm(V)$ with $\M_{\Alt}=0$. Suppose that all elements of $\M$ have rank at most $r$. Then provided $|K|\geq r+1$,
we have $\dim \M\leq r$. 
\end{theorem}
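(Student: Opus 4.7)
The plan is to combine Theorem~\ref{common_zeros}, which provides a large common totally isotropic subspace, with Theorem~\ref{dimension_bound_for_common_isotropic_points}, which bounds $\dim V(\M)$ from above. The trivial cases are immediate: if $\M = 0$ there is nothing to prove, and if $r \geq n$ the conclusion $\dim \M \leq n$ follows at once from $\M_{\Alt} = 0$ together with the fact that $\Alt(V)$ has codimension $n$ in $\Symm(V)$. So I may assume $1 \leq r < n$.

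Since $|K| \geq r+1$, Theorem~\ref{common_zeros} yields an invertible $X$ such that every Gram matrix $S$ of an element of $\M$, after replacement by $XSX^T$, has a zero $(n-r)\times(n-r)$ top-left block. This congruence preserves rank, dimension, and (in characteristic~2) also the property $\M_{\Alt} = 0$, so I may freely work with $\M$ in the normalized form. In the corresponding basis, the first $n-r$ coordinate vectors $e_1,\ldots,e_{n-r}$ satisfy $f(e_i, e_i) = 0$ for every $f \in \M$ and so lie in $V(\M)$. Because $V(\M)$ is a subspace of $V$ (the characteristic~2 phenomenon noted just before the definition of $V(\M)$), their entire span is contained in $V(\M)$, giving
\[
\dim V(\M) \geq n - r.
\]
Combining this with the upper bound $\dim V(\M) \leq n - \dim \M$ supplied by Theorem~\ref{dimension_bound_for_common_isotropic_points} forces $\dim \M \leq r$, as required.

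The only point I expect to require any care is the preservation of $\M_{\Alt} = 0$ under the congruence $S \mapsto XSX^T$, since this hypothesis is essential to invoke Theorem~\ref{dimension_bound_for_common_isotropic_points}. In characteristic~2 this is routine: if $S$ is symmetric with zero diagonal, then the $(i,i)$ entry of $XSX^T$ is
\[
\sum_{j} X_{ij}^2 S_{jj} + 2 \sum_{j<k} X_{ij} X_{ik} S_{jk},
\]
and both sums vanish. Hence the congruence sends alternating forms to alternating forms and so a subspace $\M$ with $\M_{\Alt} = 0$ to another with the same property, and the argument goes through without further difficulty.
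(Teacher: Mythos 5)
Your proof is correct and follows essentially the same route as the paper: both arguments produce an $(n-r)$-dimensional subspace lying inside $V(\M)$ and then apply Theorem~\ref{dimension_bound_for_common_isotropic_points}. The paper gets that subspace as the radical of a single rank-$r$ element, shown to be totally isotropic for all of $\M$ via Theorem 1 of \cite{Gow2}, while you get it as the span of the first $n-r$ basis vectors after the normalization of Theorem~\ref{common_zeros} --- but that normalization is derived from the very same result, so the two are the same argument in different coordinates.
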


\begin{proof}
 
We have seen that, when $\M_{\Alt}=0$, we automatically have $\dim \M\leq n$ for all fields $K$. Thus the theorem is true if $r=n$, without exception.

We may thus assume that $r<n$ and $|K|\geq r+1$. We may as well assume also that $\M$ contains some element, $f$, say, of rank $r$. Let
$R$ be the radical of $f$. It follows from Theorem 1 of \cite{Gow2} that $R$ is totally isotropic for all elements of $\M$. This implies that
\[
 R\leq V(\M). 
\]

Now $\dim R=n-r$ and Theorem \ref{dimension_bound_for_common_isotropic_points} tells us that
\[
\dim V(\M)\leq n-\dim \M.
\]
Since we know now that $\dim R\leq \dim V(\M)$, we deduce that
\[
 n-r\leq n-\dim \M
\]
and hence $\dim \M\leq r$, as required.
\end{proof}

\begin{corollary} \label{common_radicals}
 
Let $\M$ be an $r$-dimensional constant rank $r$ subspace of $\Symm(V)$. Suppose also that $|K|\geq r+1$. Then all non-zero
elements of $\M$ have the same radical, which equals $V(\M)$, provided that $\M_{\Alt}=0$. In particular, this is the case if $r$ is odd.
\end{corollary}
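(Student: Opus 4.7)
The strategy is to squeeze $\dim V(\M)$ between two equal quantities using the two main results of the section. First, because $\M_{\Alt}=0$ by hypothesis, Theorem \ref{dimension_bound_for_common_isotropic_points} gives
\[
 \dim V(\M) \leq n - \dim \M = n - r.
\]
For the matching lower bound, pick any non-zero $f \in \M$. Since every non-zero element of $\M$ has rank exactly $r$, this $f$ has maximum rank in $\M$ and so $\dim \rad f = n-r$. Reusing the key step from the proof of Theorem \ref{rank_related_dimension_bound} (namely Theorem 1 of \cite{Gow2}, whose hypothesis $|K|\geq r+1$ we have assumed), the radical $\rad f$ is totally isotropic for every element of $\M$, i.e.\ $\rad f \leq V(\M)$. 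Combining,
\[
 n-r \;=\; \dim \rad f \;\leq\; \dim V(\M) \;\leq\; n-r,
\]
which forces equality throughout. Hence $\rad f = V(\M)$, and since $f$ was an arbitrary non-zero element of $\M$, all such $f$ share the common radical $V(\M)$.

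For the ``in particular'' clause, recall that in characteristic $2$ every alternating bilinear form has even rank. If $r$ is odd, then every non-zero element of $\M$ has odd rank $r$ and so cannot be alternating; thus $\M_{\Alt} = 0$ holds automatically, and the first assertion applies.

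I do not foresee any serious obstacle: the result is essentially a recombination of Theorems \ref{dimension_bound_for_common_isotropic_points} and \ref{rank_related_dimension_bound}. The only point that requires a brief mental check is that the radical inclusion $\rad f \leq V(\M)$, as extracted from the proof of Theorem \ref{rank_related_dimension_bound}, is valid simultaneously for \emph{every} non-zero $f \in \M$ rather than a single one; this is guaranteed by the constant-rank hypothesis, since each non-zero $f$ achieves the maximum rank in $\M$ and thus satisfies the hypothesis of Theorem 1 of \cite{Gow2}.
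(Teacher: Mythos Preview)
Your proof is correct and follows essentially the same route as the paper's: establish $\rad f \leq V(\M)$ via Theorem~1 of \cite{Gow2} (using $|K|\geq r+1$), bound $\dim V(\M)\leq n-r$ via Theorem~\ref{dimension_bound_for_common_isotropic_points} (using $\M_{\Alt}=0$), and squeeze to get $\rad f = V(\M)$; the odd-$r$ clause is handled identically. Your closing remark that the constant-rank hypothesis guarantees every non-zero $f$ has maximum rank (so the radical inclusion applies to each) is a worthwhile clarification that the paper leaves implicit.
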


\begin{proof}
 
Let $f$ be any non-zero element of $\M$ and let $R$ be the radical of $f$. We showed in the proof of 
Theorem \ref{rank_related_dimension_bound} that $R\leq V(\M)$, since we are assuming that $|K|\geq r+1$.

Now suppose that $\M_{\Alt}=0$. It follows that
\[
 \dim V(\M)\leq \dim V-\dim \M=n-r
\]
by Theorem \ref{dimension_bound_for_common_isotropic_points}. Since $\dim R=n-r\leq \dim V(\M)$, it follows that $\dim V(\M)=n-r$ and $R=V(\M)$.
Thus all non-zero elements of $\M$ have the same radical, which is the subspace $V(\M)$.

Finally, suppose that $r$ is odd. Then we automatically have $\M_{\Alt}=0$, since the rank of any element of $\Alt(V)$ is even, and thus all radicals
equal $V(\M)$ in this case. 
\end{proof}

Our dimension bound for odd constant rank subspaces of $\Symm(V)$ enables us to obtain another dimension bound for subspaces of
$\Symm(V)$ in which exactly two non-zero ranks occur, one being $n$, the other $r$ say, where $0<r<n$, $r$ is odd and $n$ even.

\begin{theorem} \label{two_rank_subspace}
 
Let $\M$ be a subspace of $\Symm(V)$ and suppose that the rank of each non-zero element of $\M$ is either $r$ or $n$, where $0<r<n$, $r$ is odd 
and $n$ even.  Then we have
$\dim \M\leq n+r$. 
\end{theorem}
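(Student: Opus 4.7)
The plan is to split the bound using the alternating subspace. Set $\M_{\Alt} = \M \cap \Alt(V)$. Since $\Alt(V)$ has codimension $n$ in $\Symm(V)$, the quotient $\M/\M_{\Alt}$ embeds into $\Symm(V)/\Alt(V)$, giving
\[
 \dim \M \;\leq\; \dim \M_{\Alt} + n,
\]
so the theorem reduces to showing $\dim \M_{\Alt} \leq r$.

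A parity argument yields the next reduction: elements of $\Alt(V)$ have even rank, while non-zero elements of $\M$ have rank $r$ (odd) or $n$ (even), so every non-zero element of $\M_{\Alt}$ has rank exactly $n$, i.e., is non-degenerate. If $\M$ contains no element of rank $r$ the hypothesis collapses to a constant-rank-$n$ condition on $\M$, which I would handle as a separate, easier subcase; otherwise, fix $f_0 \in \M$ of rank $r$, and let $R = \rad(f_0)$, so $\dim R = n-r$.

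Form the subspace $\M^{(R)} = \{f \in \M : R \subseteq \rad(f)\}$. Every element of $\M^{(R)}$ has rank at most $r$ and hence, by the rank hypothesis on $\M$, rank $0$ or $r$; since $r$ is odd and alternating forms have even rank, $(\M^{(R)})_{\Alt}=0$, and Theorem \ref{rank_related_dimension_bound} applies to give $\dim \M^{(R)} \leq r$. The same parity argument shows $\M^{(R)} \cap \M_{\Alt} = 0$, so $\M^{(R)}$ and $\M_{\Alt}$ sit as disjoint subspaces of $\M$, and the remaining task is to transfer the bound on $\dim \M^{(R)}$ into the desired bound $\dim \M_{\Alt} \leq r$.

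The hard part is precisely this last step. A natural strategy is to fix a non-zero $h_0 \in \M_{\Alt}$ (we are done if $\M_{\Alt}=0$) and use its non-degeneracy to identify $V$ with $V^*$ via $h_0^\flat$, so that $\M$ is realised as a subspace of $h_0$-self-adjoint endomorphisms of $V$. Under this identification, $\M_{\Alt}$ corresponds to those self-adjoint $T$ satisfying $h_0(Tv,v) \equiv 0$ (automatically invertible for $T \neq 0$), while $f_0$ corresponds to a self-adjoint operator $F_0$ of rank $r$ with $\ker F_0 = R$. The requirement that $F_0 + T$ has rank $r$ or $n$ for every $T$ arising from $\M_{\Alt}$ forces a stringent spectral constraint on $T^{-1}F_0$, and the main obstacle is to exploit this constraint---over an arbitrary field of characteristic $2$, where the counting methods of \cite{Gow2} are unavailable---to construct an injection of $\M_{\Alt}$ into an explicit $r$-dimensional space, for instance one built from the $r$-dimensional image $F_0(V)$ or the quotient $V/R$.
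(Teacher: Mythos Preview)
Your reduction to the claim $\dim \M_{\Alt}\le r$ is not valid: that intermediate statement is false in general, so no amount of further work along the lines you sketch can complete the argument. Take $n=4$, $r=1$, and let $K$ be any field of characteristic~$2$ admitting a separable quadratic extension $L$. Viewing $W=L^2$ as a $4$-dimensional $K$-space $V$ and pushing the standard alternating $L$-form on $W$ down via the trace yields a $2$-dimensional subspace $\mathcal{A}\subseteq\Alt(V)$ in which every non-zero element is non-degenerate. Now adjoin a rank-$1$ symmetric form $f_0$, say the one with Gram matrix $e_1e_1^{T}$. For any non-zero $H\in\mathcal{A}$ one has $\det(H+\lambda e_1e_1^{T})=\det(H)\bigl(1+\lambda\, e_1^{T}H^{-1}e_1\bigr)=\det(H)\ne 0$, since $H^{-1}$ is alternating and hence $e_1^{T}H^{-1}e_1=0$. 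Thus $\M=\mathcal{A}\oplus\langle f_0\rangle$ satisfies the hypotheses of the theorem with $\dim\M_{\Alt}=2>1=r$. Your ``hard part'' is therefore not merely hard but impossible, and the spectral strategy you outline at the end is aimed at a target that does not exist.

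The paper sidesteps this entirely by choosing a different linear map out of $\M$. Rather than projecting to $\Symm(V)/\Alt(V)$, it fixes any non-zero $u\in V$ and considers $\epsilon_u\colon\M\to V^*$, $\epsilon_u(f)=f(u,\,\cdot\,)$. The kernel $\K_u$ consists of those $f\in\M$ with $u\in\rad f$; such $f$ cannot have rank $n$, so $\K_u$ is a constant rank $r$ subspace with $(\K_u)_{\Alt}=0$ (parity), and Theorem~\ref{rank_related_dimension_bound} (or \cite{Du} for small finite fields) gives $\dim\K_u\le r$ directly. Then $\dim\M\le\dim\K_u+\dim V^*\le r+n$. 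Note that your subspace $\M^{(R)}$ is contained in $\K_u$ for any $u\in R$, so you were already handling the relevant object; the issue is that the complementary piece should be controlled by the map $\epsilon_u$ to $V^*$, not by the quotient $\Symm(V)/\Alt(V)$.
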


\begin{proof}
 Let $V^*$ denote the dual space of $V$ and let $u$ be any non-zero element of $V$. We define a linear transformation $\epsilon_u:\M\to V^*$
by setting
\[
 \epsilon_u(f)(v)=f(u,v)
\]
for all $f\in\M$ and all $v\in V$. Let $\K_u=\ker \epsilon_u$. Then, since $\dim V^*=n$, we have
\[
 \dim \M\leq \dim \K_u+n.
\]
We note that $\K_u$ is the subspace of $\M$ consisting of those forms that contain $u$ in their radical. Thus, each element
of $\K_u$ has rank $r$ and we see that $\K_u$ is a constant rank $r$ subspace of $\Symm(V)$. 

Now if $K$ is finite, we have $\dim \K_u\leq r$ by Theorem 2 and Corollary 3 of \cite{Du}, whereas if $K$ is infinite, we have the same dimension
bound by Theorem \ref{rank_related_dimension_bound}. It follows that $\dim \M\leq n+r$.

\end{proof}

The bound $\dim \M\leq n+r$ obtained above is probably not optimal except when $n$ is even and $r=n/2$ is odd. We suspect that the bound
$\dim \M\leq 2n-r$ holds in general, and this is a better bound. 

 Suppose that $n=2r$, where $r$ is odd. 
The subspaces $\K_u$ described above are $r$-dimensional constant rank $r$ subspaces in $\Symm(V)$ and this enables us to obtain 
an interesting decomposition theorem for $\M$ and $V$, described below,
if we assume that $|K|\geq r+1$ and $\dim \M=3r$.

\begin{theorem} \label{spread_of_V}
 Suppose that $n=2r$, where $r$ is an odd positive integer.
Let $\M$ be subspace of $\Symm(V)$ of dimension $3r$ in which
the rank of any non-zero element is $r$ or $n$. Suppose also that $K$ has characteristic $2$ and $|K|\geq r+1$. 

Let $\N$ be any $r$-dimensional constant rank $r$ subspace of $\M$. Then $\N=\K_u$, for some $u\in V$. 

Furthermore, let $\N_1$ be any other  $r$-dimensional constant rank $r$ subspace of $\M$ different from $\N$. Then we have $\N\cap \N_1=0$.

The common radicals of the non-zero elements of the $r$-dimensional constant rank $r$ subspaces of $\M$ 
form a spread of $r$-dimensional subspaces that cover $V$. 

The $n$-dimensional subspace $\N\oplus \N_1$ contains no non-zero alternating elements and no elements of rank $r$ other than the elements in $\N$ 
and $\N_1$. Thus 
\[
(\N\oplus \N_1)\cap \N_2=0,\quad \M=\N\oplus \N_1\oplus \N_2
\] 
for any $r$-dimensional constant rank $r$ subspace $\N_2$ different from $\N$ and $\N_1$.

We also have
\[
 \M=\N\oplus \N_1\oplus \M_{\Alt}.
\]
$\M_{\Alt}$ is thus an $r$-dimensional subspace of alternating bilinear forms of rank $n$.

\end{theorem}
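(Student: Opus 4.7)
The overarching strategy is to extract everything from Corollary \ref{common_radicals} after first establishing the key intermediate fact that $\dim \K_u = r$ for every non-zero $u \in V$. The lower bound $\dim \K_u \geq r$ is exactly the inequality already used in Theorem \ref{two_rank_subspace}, namely $\dim \M \leq \dim \K_u + n$ with $\dim \M = 3r$ and $n = 2r$; the reverse bound follows because $u \in \rad f$ prevents $\rank f = n$ for any non-zero $f \in \K_u$, so $\K_u$ is a constant rank $r$ subspace with $(\K_u)_{\Alt} = 0$ (as $r$ is odd), and Theorem \ref{rank_related_dimension_bound} applies. Corollary \ref{common_radicals} then endows each $\K_u$ with a common radical $R_u = V(\K_u)$ of dimension $r$ containing $u$.

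The first three assertions of the theorem follow quickly. Given $\N$ as in the statement, Corollary \ref{common_radicals} provides its common radical $R$; any non-zero $u \in R$ satisfies $\N \subseteq \K_u$, and equality of dimensions forces $\N = \K_u$. For distinct $\N, \N_1$, a non-zero $u \in R \cap R_1$ would yield $\N = \K_u = \N_1$, so the radicals are disjoint; then $R \oplus R_1 = V$, and any $f \in \N \cap \N_1$ has $\rad f \supseteq R + R_1 = V$, giving $f = 0$. Since every non-zero $u$ lies in some $R_u$, these radicals form a spread of $V$.

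To analyze $\N \oplus \N_1$ I would work in the decomposition $V = R \oplus R_1$: a non-zero $f \in \N$ has block form $\left(\begin{smallmatrix} 0 & 0 \\ 0 & B \end{smallmatrix}\right)$ with $B$ a non-degenerate symmetric form on $R_1$, and a non-zero $f_1 \in \N_1$ has $\left(\begin{smallmatrix} B_1 & 0 \\ 0 & 0 \end{smallmatrix}\right)$ with $B_1$ non-degenerate on $R$. When both blocks are non-zero, $f + f_1$ has rank $n$, and its restriction to $R \times R$ equals $B_1$, which is non-degenerate symmetric on a space of odd dimension $r$ and hence cannot be alternating---so $f + f_1$ is not alternating either. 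This yields both parts of the claim about $\N \oplus \N_1$. A third $\N_2$ meeting $\N \oplus \N_1$ non-trivially would contribute a rank-$r$ element forced into $\N \cup \N_1$, hence $\N_2 \in \{\N, \N_1\}$ by the previous step; dimension counting then gives $\M = \N \oplus \N_1 \oplus \N_2$. For the alternating decomposition, the non-intersection provides $\dim \M_{\Alt} \leq r$, while the quotient map $\Symm(V) \to \Symm(V)/\Alt(V)$ (whose target has dimension $n = 2r$) restricted to $\M$ shows $\dim \M_{\Alt} \geq 3r - 2r = r$, so equality holds and the rank assertion for $\M_{\Alt}$ is immediate since $r$ is odd.

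The main obstacle is really just the first paragraph: pinning down $\dim \K_u = r$ uniformly in $u$ through Theorem \ref{rank_related_dimension_bound}, an application that crucially uses the hypothesis $|K| \geq r+1$. Once that is in hand, the remaining assertions are routine block-diagonal bookkeeping in the decomposition $V = R \oplus R_1$ together with the spread structure.
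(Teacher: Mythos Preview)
Your proof is correct and follows essentially the same route as the paper: apply Corollary~\ref{common_radicals} to obtain the common radicals, analyze the block decomposition $V = R \oplus R_1$, and finish with dimension counting against $\Alt(V)$. You are in fact more explicit than the paper in two places: you spell out why $\dim \K_u = r$ (the paper asserts this in the text preceding the theorem but does not rederive it inside the proof), and you actually prove the first assertion $\N = \K_u$, which the paper's proof curiously never addresses.

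There is one small but genuine difference in the internal logic. To show $\N \cap \N_1 = 0$ and $R \cap R_1 = 0$, the paper argues each time by contradiction that $\N + \N_1$ would be a constant rank $r$ subspace of dimension greater than $r$, violating Theorem~\ref{rank_related_dimension_bound}. You instead first obtain $R \cap R_1 = 0$ from the uniqueness statement $\N = \K_u$ (a common $u$ would force $\N = \K_u = \N_1$), and then deduce $\N \cap \N_1 = 0$ directly since a common form would have radical containing $R + R_1 = V$. Your ordering is slightly cleaner and makes the identification $\N = \K_u$ do real work, whereas the paper's version never actually uses that identification. Both arguments are short and rest on the same ingredients.
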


\begin{proof}
 Let $\N$ be any $r$-dimensional constant rank $r$ subspace in $\M$.  Since we are assuming that $|K|\geq r+1$,   
Corollary \ref{common_radicals} shows that all the non-zero elements in $\N$ have the same radical, $U$, say, where $U$ is an $r$-dimensional subspace of 
$V$. 

Suppose now that $\N_1$ is a different $r$-dimensional constant rank $r$ subspace of $\M$. We claim that $\N\cap \N_1=0$. For suppose that
$f$ is a non-zero element in $\N\cap \N_1$. Then since all the elements of $\N_1$ have the same radical, again by Corollary
\ref{common_radicals}, 
and $f$ is an element of $\N_1$ with radical $U$, it follows that all elements of $\N_1$ also have radical equal to $U$. 
Now if we 
consider the subspace $\N+\N_1$ of $\M$, we see that $U$ is contained in the radical of each element of $\N+\N_1$. This implies, by the rank 
properties of the elements of $\M$, that all non-zero elements of $\N+\N_1$ have rank $r$. Thus, $\N+\N_1$ is a constant rank $r$ subspace
of dimension greater than $r$, and this contradicts Theorem \ref{rank_related_dimension_bound}. We deduce that $\N\cap \N_1=0$, as claimed.

Let $U_1$ be the common radical of the non-zero elements of $\N_1$. We claim that $U\cap U_1=0$. 
For suppose that $u$ is a non-zero element of $U\cap U_1$. Such an element $u$ is in the radical of all elements of $\N$ and of all elements of $\N_1$.
But then $u$ is in the radical of all elements of $\N+\N_1$. This implies that all non-zero elements of $\N+\N_1$ have rank
$r$, which is impossible by the argument above. Thus, $U\cap U_1=0$, as claimed. Furthermore, since any non-zero element $w$ of $V$ is in the
common radical of the non-zero elements of the $r$-dimensional constant rank $r$ subspace $\K_w$, we see that the common radicals form a spread
of subspaces of dimension $r$ that covers $V$.

Since $U$ and $U_1$ both have dimension $r$ and intersect trivially, we have $V=U\oplus U_1$. Let now $g$ and $g_1$ be non-zero elements of $\N$
and $\N_1$, respectively. We claim that $g+g_1$ has rank $n$, and is not alternating. For, since $g_1$ is identically zero on $U_1\times U_1$,
and has rank $r$ on $V\times V$, it has rank $r$ on $U\times U$. Likewise, $g$ has rank $r$ on $U_1 \times U_1$ and is identically zero on
$U\times U$. Thus since $V=U\oplus U_1$ and this decomposition is orthogonal with respect to $g$ and $g_1$, we deduce that $g+g_1$ has rank $2r=n$.
This implies that
the only elements of rank $r$ in $\N\oplus \N_1$ are those in $\N$ or in $\N_1$.

Furthermore, suppose if possible that $g+g_1$ is alternating on $V\times V$. It would follow that $g$ is alternating on $U_1\times U_1$, which is 
impossible as $g$ has odd rank $r$ on $U_1\times U_1$. Thus $g+g_1$ is also non-alternating, as claimed. 

Let $\N_2$ be any $r$-dimensional constant rank $r$ subspace different from $\N$ and $\N_1$. We have just shown that the only
elements of rank $r$ in $\N\oplus \N_1$ are those in $\N$ or in $\N_1$. Since we have already proved that $\N\cap N_2=\N_1\cap \N_2=0$,
it follows that $(\N\oplus \N_1)\cap \N_2=0$. Dimension arguments readily imply that $\M=\N\oplus \N_1\oplus \N_2$.

Finally, we know that $\M_{\Alt}$ has codimension at most $n$ in $\M$, and we have shown that it intersects $\N\oplus \N_1$ trivially.
Dimension arguments again show that $ \M=\N\oplus \N_1\oplus \M_{\Alt}$.

\end{proof}

\noindent\textbf{Some Constructions and Comments.}
It is reasonable to enquire whether examples of fields $K$ exist where the hypotheses of Theorem \ref{spread_of_V} are satisfied and the upper
bound for $\dim \M$ is met. Here we describe such examples.

Suppose that $K$ has an extension field $L$, say, of odd degree $r>1$ over $K$. Let $W$ be a two-dimensional vector space over $L$ and let
$V$ denote $W$ considered as a vector space of dimension $2r$ over $K$. Let $\Tr$ denote the trace form from $L$ into $K$. We note that
$\Tr$ is not identically zero, since it is the identity on $K$.

Given $f\in \Symm(W)$, we define $f'\in \Symm(V)$ by setting
\[
 f'(u,v)=\Tr(f(u,v))
\]
for all $u$ and $v$ in $V$. Let $\M$ denote the subspace of $\Symm(V)$ consisting of all such forms $f'$. Then it is easy to see that
$\dim \M=3r$, and the non-zero forms in $\M$ have rank $r$ or $n=2r$. Thus $\M$ meets our requirements.

We might also ask if the hypothesis in Theorem \ref{spread_of_V} that $r$ is odd is really necessary. The following simple examples
show that our methods and deductions are sensitive to the parity of $r$.

Let $V$ be a 4-dimensional vector space over $K$, where $K$ is again a field of characteristic 2. We set $\M=\Alt(V)$, which is a 6-dimensional
subspace of $\Symm(V)$ in which the non-zero elements have rank 2 or 4. It is straightforward to see that $M$ contains 3-dimensional
constant rank 2 subspaces.

On the other hand, suppose that $K$ has a separable extension field $L$ of degree 2 over $K$. Then we may construct a 6-dimensional
subspace $\M_1$, say, of $\Symm(V)$ from $\Symm(W)$, where $W$ is a 2-dimensional space of $L$ (as we did above using an odd degree extension
of $K$). The non-zero elements of $\M_1$ also have rank 2 or 4, but there are no 3-dimensional constant rank 2 subspaces in $\M_1$.

In the case that $K=\mathbb{F}_q$, where $q$ is a power of 2, we calculate that $\M$ contains exactly $(q^2+1)(q^3-1)$ elements of rank 2,
whereas $\M_1$ contains exactly $q^4-1$ elements of rank 2, which is a considerably smaller number.

If we perform these constructions over $\mathbb{F}_{q^r}$, where $r$ is an arbitrary positive integer, rather than $\mathbb{F}_q$, 
we obtain 6-dimensional subspaces $\N$, $\N_1$, say, of symmetric bilinear forms defined on a 4-dimensional space over
$\mathbb{F}_{q^r}$. The rank of each non-zero element of
$\N$ and of $\N_1$ is 2 or 4. The process of restriction of scalars to $\mathbb{F}_{q}$ yields two $6r$-dimensional subspaces of symmetric bilinear
forms defined on a $4r$-dimensional space over $\mathbb{F}_{q}$ in which the rank of each non-zero element is $2r$ or $4r$.
The two subspaces clearly contain different numbers of elements of rank $2r$, the number being $(q^{2r}+1)(q^{3r}-1)$ in one case,
$q^{4r}-1$ in the other. Thus there is no analogue of Theorem \ref{spread_of_V} in the case of finite fields of characteristic 2 
when we allow $r$ to be even
(we can certainly say that at least two different structures exist).

\section{Improved versions of Corollary 2 for finite fields}

\noindent Using the fact that the radicals of non-zero elements in constant rank subspaces of $\Symm(V)$ are subspaces of $V(\M)$ if the underlying
field $K$ of characteristic 2 is sufficiently big, we will show in this section how  Corollary \ref{common_radicals} concerning the equality of radicals can be improved
for finite fields. We require some preliminary work bounding the number of subspaces of a given dimension in a vector space over a finite field.

\begin{lemma} \label{elementary_inequality}
 
 Let $a$ and $b$ be integers satisfying $1\leq b\leq a$, and let $x\geq 2$ be a real number. Then
 \[
  \frac{x^a-1}{x^b-1}<x^{a-b}(1+x^{1-b}).
 \]

\end{lemma}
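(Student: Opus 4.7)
The plan is an elementary algebraic manipulation with a small case split. Since $x\geq 2$ forces $x^b-1>0$, I would clear denominators: the stated inequality is equivalent to
\[
x^a - 1 < (x^{a-b} + x^{a-2b+1})(x^b - 1).
\]
Expanding the right-hand side gives $x^a - x^{a-b} + x^{a-b+1} - x^{a-2b+1}$, so after cancelling $x^a$ the inequality reduces to the equivalent polynomial statement
\[
x^{a-b}(x-1) + 1 > x^{a-2b+1}.
\]

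I would then verify this reduced inequality by splitting on the sign of the exponent $a - 2b + 1$. If $a\leq 2b-1$, then $x^{a-2b+1}\leq 1$, whereas the left-hand side is at least $1\cdot 1 + 1 = 2$ (using $x^{a-b}\geq 1$ and $x-1\geq 1$), so the inequality holds comfortably. If instead $a\geq 2b$, then $a-b\geq a-2b+1$, hence $x^{a-b}\geq x^{a-2b+1}$; combining with $x-1\geq 1$ yields $x^{a-b}(x-1)\geq x^{a-2b+1}$, and the extra $+1$ supplies the strict inequality.

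There is essentially no obstacle here: the lemma is purely arithmetic, with the only mild care needed in the boundary case $a=b$ (where the reduced inequality becomes $(x-1)+1 > x^{1-b}$, i.e.\ $x > x^{1-b}$, which is immediate since $b\geq 1$ and $x\geq 2$). The shape of the bound $x^{a-b}(1+x^{1-b})$ is chosen precisely so that the correction term $x^{1-b}$ absorbs the remainder that would arise from a long division of $x^a-1$ by $x^b-1$ when $b$ does not divide $a$; once one rewrites $\frac{x^a-1}{x^b-1} = x^{a-b} + \frac{x^{a-b}-1}{x^b-1}$, the whole content of the lemma is the inequality $\frac{x^{a-b}-1}{x^b-1} < x^{a-2b+1}$, which is exactly what the case analysis above establishes.
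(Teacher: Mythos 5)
Your proof is correct and follows essentially the same route as the paper: clear the denominator $x^b-1>0$, expand, and verify the resulting polynomial inequality directly (the paper checks positivity of the difference via the observation $x\geq 1+x^{1-b}$, while you organize the same verification as a short case split on the sign of $a-2b+1$).
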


\begin{proof}
 
 We have
 \[
  (x^b-1)x^{a-b}(1+x^{1-b})-(x^a-1)=x^{a-b}(x+x^{b-a}-1-x^{1-b}).
 \]
 Now, since we are assuming that $x\geq 2$ and $b\geq 1$, we have
 \[
  x\geq 1+x^{1-b},
 \]
and the inequality follows, since $x^{b-a}>0$ also.

\end{proof}

\begin{lemma} \label{advanced_inequality}
 
 Let $a$ and $b$ be integers satisfying $1\leq b\leq a$, and let $x\geq 4$ be a real number. Then if we define
 \[
  F(x)=\frac{(x^a-1)(x^{a-1}-1)\cdots (x^{a-b+1}-1)}{(x-1)(x^2-1)\cdots (x^b-1)},
 \]
we have $F(x)<4x^{(a-b)b}$.
 
\end{lemma}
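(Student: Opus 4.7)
The plan is to pair each numerator factor with a denominator factor so that Lemma \ref{elementary_inequality} can be applied term by term, and then bound the resulting product of correction factors $1 + x^{1-i}$ by a constant. First I would rewrite
\[
F(x) = \prod_{i=1}^{b} \frac{x^{a-b+i}-1}{x^{i}-1},
\]
which is just a reindexing of the original expression (the numerator runs over exponents $a-b+1,\dots,a$ and the denominator over $1,\dots,b$). Since $b \leq a$, the pair $(a-b+i,\, i)$ satisfies $1 \leq i \leq a-b+i$, so Lemma \ref{elementary_inequality} applies and gives
\[
\frac{x^{a-b+i}-1}{x^{i}-1} \;<\; x^{a-b}\bigl(1 + x^{1-i}\bigr).
\]

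Multiplying these $b$ inequalities produces
\[
F(x) \;<\; x^{(a-b)b} \prod_{i=1}^{b}\bigl(1 + x^{1-i}\bigr) \;=\; 2\, x^{(a-b)b} \prod_{j=1}^{b-1}\bigl(1 + x^{-j}\bigr),
\]
where I have split off the $i=1$ term, which contributes exactly $2$. It therefore suffices to prove that the tail product $P := \prod_{j=1}^{b-1}(1 + x^{-j})$ satisfies $P < 2$ whenever $x \geq 4$.

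To control $P$ I would pass to logarithms and use the elementary inequality $\log(1+y) \leq y$ for $y \geq 0$, obtaining
\[
\log P \;\leq\; \sum_{j=1}^{b-1} x^{-j} \;<\; \sum_{j=1}^{\infty} x^{-j} \;=\; \frac{1}{x-1} \;\leq\; \frac{1}{3},
\]
since $x \geq 4$. Hence $P < e^{1/3} < 2$, and combining with the previous display yields $F(x) < 4 x^{(a-b)b}$, as required. The only delicate point, rather than a genuine obstacle, is matching the exact constant $4$ in the statement: this works out cleanly because separating the $i=1$ factor produces a clean $2$, and the hypothesis $x \geq 4$ forces the remaining geometric-tail product to contribute strictly less than $2$; a weaker lower bound on $x$ would only yield a weaker leading constant.
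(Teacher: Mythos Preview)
Your argument is correct and follows essentially the same route as the paper: apply Lemma~\ref{elementary_inequality} term by term to reach $F(x)<x^{(a-b)b}\prod_{i=1}^{b}(1+x^{1-i})$, then bound the product via $1+z\le e^z$ and a geometric series. The only cosmetic differences are that the paper pairs the factors as $\dfrac{x^{a-i+1}-1}{x^{i}-1}$ rather than $\dfrac{x^{a-b+i}-1}{x^{i}-1}$ (both pairings yield the same bound), and the paper bounds the full product $\prod_{i=1}^{b}(1+x^{1-i})$ by $e^{4/3}<4$ directly instead of splitting off the $i=1$ factor as you do.
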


\begin{proof}
 Consider a term
 \[
  \frac{x^{a-i+1}-1}{x^i-1}
 \]
 in the product representation of $F(x)$. We have
 \[
  \frac{x^{a-i+1}-1}{x^i-1}<x^{a-2i+1}(1+x^{1-i}),
 \]
by Lemma \ref{elementary_inequality}. Thus,
\[
 F(x)<x^A\prod_{i=1}^b (1+x^{1-i}),
\]
where
\[
 A=\sum_{i=1}^b (a-2i+1)=(a-b)b.
\]
To prove the inequality for $F(x)$, it therefore suffices to show that
\[
 \prod_{i=1}^b (1+x^{1-i})<4
\]
when $x\geq 4$. 

Now if $z$ is a positive real number, it is well known that
\[
1+z< e^z.
\]
It follows that
\[
 \prod_{i=1}^b (1+x^{1-i})<e^S,
\]
where
\[
 S=\sum _{i=1}^b x^{1-i}.
\]
Since we are assuming that $x\geq 4$,
\[
 S<\sum_{i=1}^\infty 4^{1-i}=\frac{4}{3}.
\]
Finally, $e^{4/3}<4$, since $e^4<55<64=4^3$. This yields the bound $F(x)<4x^{(a-b)b}$, as required.

\end{proof}

\begin{theorem} \label{common_radicals_small_dimension}

Let $q$ be a power of $2$ and let $V$ be a vector space of dimension $n$ over $\mathbb{F}_q$. Let $r$ be an odd  integer
satisfying $1<r<n$. Let $\M$ be a $d$-dimensional constant rank $r$ subspace of $\Symm(V)$. Then if $q\geq r+1$ and
\[
 r\geq d> \frac{2(n-r)r}{2(n-r)+1},
\]
all the non-zero elements of $\M$ have the same radical.
 
\end{theorem}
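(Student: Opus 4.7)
The plan is to argue by contradiction. Assume the distinct radicals $R_1,\dots,R_k$ occurring among the nonzero elements of $\M$ satisfy $k\ge 2$, and derive a contradiction with the hypothesis, which rewritten reads $d>2(r-d)(n-r)$.

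For each $(n-r)$-dimensional subspace $R$ of $V$, set $\M_R=\{f\in\M:R\subseteq\rad(f)\}$. Because every nonzero $f\in\M$ has rank $r$ and therefore radical of dimension exactly $n-r$, the nonzero elements of $\M_{R_i}$ are exactly those with radical equal to $R_i$, and for $i\ne j$ the subspaces $\M_{R_i},\M_{R_j}$ meet trivially. Writing $d_i=\dim\M_{R_i}$, this pairwise triviality gives $d_i+d_j\le d$, and partitioning the nonzero elements of $\M$ by radical yields the counting identity
\[
\sum_{i=1}^{k}(q^{d_i}-1)=q^d-1.
\]

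To bound $k$ from above I would use that $R_i\subseteq V(\M)$, exactly as in the proof of Theorem \ref{rank_related_dimension_bound}. Since $r$ is odd we have $\M_{\Alt}=0$, and since $\mathbb{F}_q$ is perfect the remark after Theorem \ref{dimension_bound_for_common_isotropic_points} gives $\dim V(\M)=n-d$. Hence $k$ is at most the number of $(n-r)$-dimensional subspaces of an $(n-d)$-dimensional $\mathbb{F}_q$-space; by Lemma \ref{advanced_inequality}, which applies as $q\ge r+1\ge 4$ and $d\le r$,
\[
k\le \binom{n-d}{n-r}_q<4q^{(r-d)(n-r)}\le q^{(r-d)(n-r)+1}.
\]

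For the lower bound on $k$ I would extract two inequalities from the counting identity. With $d_0=\max_i d_i$, the trivial bound $d_i\le d_0$ on every summand yields $k(q^{d_0}-1)\ge q^d-1$ and hence $k\ge q^{d-d_0}+1$. The sharper bound $d_i\le d-d_0$ on the summands with $i\ne 0$, coming from pairwise triviality, rearranges to $k\ge q^{d_0}+1$. Together these force $k\ge q^m+1$ with $m=\max(d_0,d-d_0)\ge\lceil d/2\rceil$. Comparison with the upper bound then forces $m\le(r-d)(n-r)$, so $d=d_0+(d-d_0)\le 2m\le 2(r-d)(n-r)$, contradicting the hypothesis.

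The hard part is to extract \emph{both} lower bounds on $k$ from the same identity and combine them through the maximum: using only the trivial per-summand bound, or only the pairwise bound, loses roughly a factor of two in the exponent and misses the hypothesis $d>2(r-d)(n-r)$ by exactly that factor; routing the pair through $m\ge\lceil d/2\rceil$ is what makes the threshold $2(n-r)r/(2(n-r)+1)$ emerge.
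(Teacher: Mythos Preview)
Your argument is correct and follows essentially the same route as the paper's proof: assume at least two distinct radicals, observe that the subspaces $\M_{R_i}$ partition the nonzero elements of $\M$, bound the number $k$ of parts above by the Gaussian binomial $\binom{n-d}{n-r}_q<4q^{(r-d)(n-r)}$ via Lemma~\ref{advanced_inequality}, bound $k$ below by $q^{\lceil d/2\rceil}+1$, and compare. The only difference is cosmetic: the paper invokes an external lemma (Lemma~1 of \cite{Gow2}) for the lower bound $k\ge q^{\lceil d/2\rceil}+1$ and then splits into the cases $d$ even and $d$ odd, whereas you supply a direct self-contained derivation of that bound via the maximal part $d_0$ and handle both parities at once through $m=\max(d_0,d-d_0)\ge\lceil d/2\rceil$.
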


\begin{proof}
 The radical of any non-zero element of $\M$ has dimension $n-r$, and since we are assuming that $q\geq r+1$, the radical is contained
 in the subspace $V(\M)$, by the proof of Corollary \ref{common_radicals}. Furthermore, $\dim V(\M)=n-d$, by the remark following
 the proof of Theorem \ref{dimension_bound_for_common_isotropic_points}, since we are working over a finite, and hence perfect, field.
 
 Suppose that
 not all the non-zero elements of $\M$ have the same radical. Let $R_1$, \dots, $R_t$ be the different
$(n-r)$-dimensional subspaces of $V$ that occur as the radicals of the non-zero elements of $\M$, where $t>1$. 

We set
\[
 \M_i=\{ f\in \M: R_i \leq \rad\,f\},\, 1\leq i\leq t.
\]
It is easy to see that $\M_i\cap \M_j=0$ if $i\neq j$, and $\M$ is the union of the subspaces $\M_i$. Thus we have a partition
of $\M$ into $t>1$ non-trivial subspaces. It follows from Lemma 1 of \cite{Gow2} that
\[
 t\geq q^{d/2}+1
\]
if $d$ is even, and
\[
 t\geq q^{(d+1)/2}+1
\]
if $d$ is odd. 

The $R_i$ are $t$ different $(n-r)$-dimensional subspaces of the $(n-d)$-dimensional subspace $V(\M)$. It is well known that the total number
of $(n-r)$-dimensional subspaces of an $(n-d)$-dimensional space is the $q$-binomial coefficient
\[
 \frac{(q^{n-d}-1)\cdots (q^{r-d+1}-1)}{(q-1)\cdots (q^{n-r}-1)}.
\]
Lemma \ref{advanced_inequality} implies that
\[
 t\leq 4q^{(n-r)(r-d)},
\]
since we are assuming that $q\geq r+1\geq 4$. 

Suppose first that $d$ is even. We then have
\[
 q^{d/2}+1<4q^{(n-r)(r-d)}.
\]
We claim that in this case
\[
 \frac{d}{2}\leq (n-r)(r-d).
\]
For if this inequality does not hold, we must rather have
\[
 \frac{d}{2}\geq (n-r)(r-d)+1
\]
and hence 
\[
 q^{d/2}+1>q^{d/2}\geq qq^{(n-r)(r-d)}\geq 4q^{(n-r)(r-d)},
\]
since we are assuming that $q\geq 4$. This is a contradiction to our previous inequality and therefore we indeed have
\[
 \frac{d}{2}\leq (n-r)(r-d).
\]

When we solve the inequality for $d$, we obtain
\[
 d\leq \frac{2(n-r)r}{2(n-r)+1}.
\]
Thus, if we take
\[
 d>\frac{2(n-r)r}{2(n-r)+1}
\]
(and recall that $d$ is an integer), it must be the case that all the radicals are the same.

Similarly, if $d$ is odd, and $t>1$, we must have
\[
  \frac{d+1}{2}\leq (n-r)(r-d).
\]
This leads to the inequality
\[
 d\leq \frac{2(n-r)r}{2(n-r)+1}-\frac{1}{2(n-r)+1}.
\]
Thus, if we again take
\[
 d>\frac{2(n-r)r}{2(n-r)+1},
\]
the radicals are certainly all the same.

\end{proof}

We can think of Theorem \ref{common_radicals_small_dimension} in the following way. We know from Corollary \ref{common_radicals}
that all the non-trivial radicals are the same in an $r$-dimensional constant rank $r$ subspace of $\Symm(V)$ when $q\geq r+1$. We can
diminish the dimension by the fraction $2(n-r)/(2(n-r)+1)<1$ and still obtain an identical conclusion about the radicals. The theorem
has no content, beyond being a restatement of Corollary \ref{common_radicals}, when $r\leq (2n-1)/3$.

\bigskip

\noindent\textbf{Acknowledgement.} We are grateful to John Sheekey for conversations relating to Theorem 6. The subject matter of the
theorem occurs in the topic of pseudo-arcs and pseudo-ovals in geometry.

\end{document}